\def\@settitle{\begin{center}\baselineskip14\p@\relax\bfseries{\large\@title}\thispagestyle{empty}\end{center}}
\def\@setauthors{%
  \begingroup
  \def\thanks{\protect\thanks@warning}%
  \trivlist
  \centering\footnotesize \@topsep30\p@\relax
  \advance\@topsep by -\baselineskip
  \item\relax
  \author@andify\authors
  \def\\{\protect\linebreak}%
  {\authors}%
  \ifx\@empty\contribs
  \else
    ,\penalty-3 \space \@setcontribs
    \@closetoccontribs
  \fi
  \endtrivlist
  \endgroup
}
\def\maketitle{\par
  \@topnum\z@ 
  \@setcopyright
  \thispagestyle{firstpage}
  \ifx\@empty\shortauthors \let\shortauthors\shorttitle
  \else \andify\shortauthors
  \fi
  \@maketitle@hook
  \begingroup
  \@maketitle
  \toks@\@xp{\shortauthors}\@temptokena\@xp{\shorttitle}%
  \toks4{\def\\{ \ignorespaces}}
  \edef\@tempa{%
    \@nx\markboth{\the\toks4
      \@nx
      {\the\toks@}}{\the\@temptokena}}%
  \@tempa
  \endgroup
  \c@footnote\z@
  \@cleartopmattertags
}
\newtheorem{Theorem}{Theorem}[section]
\newtheorem{Proposition}[Theorem]{Proposition}
\newtheorem{Lemma}[Theorem]{Lemma}
\theoremstyle{definition}
\newtheorem{Remark}[Theorem]{Remark}
\newcommand{\pmat}[1]{\begin{pmatrix}#1\end{pmatrix}}
\newcommand{\smat}[1]{\left(\begin{smallmatrix}#1\end{smallmatrix}\right)}
\newcommand{\coupling}[2]{\left(\,#1\,\middle|\,#2\,\right)}
\newcommand{\dtilde}[1]{\accentset{\approx}{#1}}
\newcommand{\bs}[1]{\boldsymbol{#1}}
\newcommand{\set}[2]{\left\{#1;\,#2\right\}}
\newcommand{\setbig}[2]{\bigl\{#1;\,#2\bigr\}}
\newcommand{\norm}[1]{\left\lVert#1\right\rVert}
\newcommand{\innV}[2]{\left\langle\,#1\,\middle|\,#2\,\right\rangle}
\newcommand{\R}{\mathbb{R}}
\newcommand{\tm}{\triangle}
\newcommand{\Clanprod}[2]{#1\,\tm\,#2}
\newcommand{\dtm}{\rotatebox[origin=c]{180}{$\tm$}}
\newcommand{\dClanprod}[2]{#1\,\dtm\,#2}
\newcommand{\ul}[1]{\underline{#1}}
\newcommand{\ol}[1]{\overline{#1}}
\newcommand{\newone}[1]{#1'}
\begin{document}
\title
{An example of homogeneous cones whose basic relative invariant has maximal degree}
\author{Hideto Nakashima}
\address{The Institute of Statistical Mathematics\\ 
Midori-cho 10-3, Tachikawa, Tokyo 190-8562, 
Japan}
\email{hideto@ism.ac.jp}
\keywords{
	Homogeneous cones, 
	hyperbolicity cones,
	hyperbolic polynomials.
}
\subjclass[2020]{ 	
Primary 52A20;		
Secondary 14P10,		
22F30.				
}
\date{}
\begin{abstract}
It is known that degrees of basic relative invariants of homogeneous open convex cones of rank $r$ are less than or equal to $2^{r-1}$.
In this article,
we show that there exists a homogeneous cone of rank $r$
one of whose basic relative invariants has degree $2^{r-1}$.
The main idea for this is to 
construct such a homogeneous cone inductively to have specific structure constants
which enable us to calculate degrees of its basic relative invariants.
We study homogeneous cones of rank $3$ in detail
in order to see
non-triviality of the existence of homogeneous cones with given structure constants.
%
\end{abstract}
\maketitle
\section{Introduction}
In this article,
we consider a problem raised in a paper \cite{GIL} on relationship between ranks of homogeneous open convex cones and degrees of their basic relative invariants.
This problem is related to hyperbolic polynomials connected with many important classes of optimization problems such as linear programs,
semi-definite programs and, more generally, symmetric cone programming.
Degrees of hyperbolic polynomials play an important role in interior-point methods of which the degrees are used to bound the worst-case iteration complexity
(cf.\ \cite{GT98,Gul97}).
The paper~\cite{GIL} shows that 
any homogeneous open convex cone of rank $r$ satisfies $d\le 2^{r-1}$ 
where $d$ is a degree of its basic relative invariant,
and they asked that this evaluation can be improved or not.
We give an answer to this problem negatively by giving an example of homogeneous open convex cone satisfying $d=2^{r-1}$,
and hence this upper bound is strict.

\begin{Theorem}
\label{theo}
For any rank $r$,
there exists a homogeneous open convex cone one of whose basic relative invariants has degree $2^{r-1}$.
\end{Theorem}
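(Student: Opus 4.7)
I would prove the theorem by induction on the rank $r$. The base case $r=1$ is trivial: the cone $\R_{>0}$ has the single basic relative invariant $\Delta_1(x)=x$, of degree $1=2^{0}$. For the inductive step, I would assume that $\Omega_{r-1}$ is a homogeneous open convex cone of rank $r-1$ whose basic relative invariants $\Delta_1,\ldots,\Delta_{r-1}$ have degrees $2^{0},2^{1},\ldots,2^{r-2}$, and construct $\Omega_r$ by extending the associated normal $T$-algebra of Vinberg. Concretely, one enlarges the underlying vector space $\bigoplus_{i\le j\le r-1}V_{ij}$ by adjoining new subspaces $V_{ir}$ for $i<r$, of prescribed dimensions $n_{ir}$, together with a one-dimensional $V_{rr}$. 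One must then define the bilinear operations $V_{ij}\times V_{jr}\to V_{ir}$ compatibly with the existing multiplication so that the result is again a normal $T$-algebra, and take $\Omega_r$ to be its cone of ``squares''.

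The first $r-1$ basic relative invariants of $\Omega_r$ are then the pullbacks of those of $\Omega_{r-1}$, so it remains to control the degree of the new invariant $\Delta_r$. In the Vinberg framework $\Delta_r$ is obtained by a generalized Schur-complement procedure: after Gaussian-type elimination of the row/column indexed by $r$, one arrives at an expression of the form $\Delta_r=\xi\,\Delta_{r-1}(y)-P(v,y)$ where $(y,v,\xi)$ are the natural coordinates and $P$ is a polynomial whose construction forces inverses of smaller $\Delta_k$ to appear. The degree of $\Delta_r$ is determined by how many factors of $\Delta_{r-1}$ one must multiply through to clear denominators, and this in turn is prescribed by the structure constants $n_{ir}$ and the multiplication rules. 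The key quantitative step is to show that one may choose these structure constants so that the clearing produces exactly the degree $2\cdot 2^{r-2}=2^{r-1}$, thereby saturating the upper bound of \cite{GIL} at each stage of the induction.

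\textbf{Main obstacle.} The hard part is existence rather than computation: arbitrary choices of dimensions $(n_{ir})$ and formal bilinear pairings do not in general assemble into a normal $T$-algebra, because Vinberg's axioms impose nontrivial associativity and positivity relations among the products $V_{ij}\times V_{jk}\to V_{ik}$. The inductive construction therefore cannot simply declare the desired structure constants; one must exhibit explicit operations that satisfy these cocycle-type compatibility conditions at every new stage. This is precisely the non-triviality that the paper's detailed rank-3 analysis is designed to expose and resolve, after which the inductive step proceeds by choosing, at each rank, structure constants of the same flavour as those shown to be realizable in the rank-3 case and verifying inductively that the axioms continue to hold.
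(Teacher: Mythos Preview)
Your proposal correctly identifies the shape of the argument (induction on the rank, with the crux being the \emph{existence} of an extension satisfying the clan axioms), but it stops precisely where the work begins: you never actually produce the extension, and you misread the role of the rank-$3$ section. Section~\ref{sect:hom3} is purely illustrative---it shows that prescribing the $d_{kj}$ lands you in the Hurwitz problem and hence that existence is genuinely subtle---but it is not an ingredient in the proof of the theorem. The inductive step is carried out entirely in Section~\ref{sect:proof}, and it does not look like what you sketch.

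The missing idea is this: rather than adjoining a new \emph{last} index (new spaces $V_{ir}$ with bilinear maps $V_{ij}\times V_{jr}\to V_{ir}$ to be invented), the paper adjoins a new \emph{first} index $0$ by setting $n_0:=2n_1$ and
\[
\newone{\mathcal{V}}_{k0}:=\bigl\{\,(Y_{k1}\ \ Z_{k1})\ :\ Y_{k1},Z_{k1}\in\mathcal{V}_{k1}\,\bigr\},
\]
i.e.\ each new off-diagonal block is literally two copies of the old first column placed side by side. With this choice the products demanded by (V1)--(V3) are just the old products applied entrywise, so the axioms are inherited for free, and one reads off $\dim\newone{\mathcal{V}}_{k0}=2\dim\mathcal{V}_{k1}$. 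Iterating from $\Omega_1=\R_{>0}$ yields $d_{kj}=2^{k-j}$, and then the degree $\deg\Delta_r=2^{r-1}$ follows not from an ad hoc Schur-complement count but from the explicit algorithm of Lemma~\ref{lemma:alg} (Remark~\ref{rem:0}). Your ``add at the end'' scheme would instead require building genuinely new composition-of-quadratic-forms data at every step---exactly the Hurwitz-type obstruction you flag---whereas the paper sidesteps this entirely by doubling at the front.
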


Let $\Omega$ be a homogeneous open convex cone of rank $r$ in a finite dimensional vector space $V$.
In this article,
we employ a matrix realization of homogeneous cones due to Ishi~\cite{Ishi2006}.
Let us take a collection $\set{\mathcal{V}_{kj}}{1\le j<k\le r}$ of vector spaces 
corresponding to $\Omega$
(see \eqref{def:V} for detail).
Since we have an algorithm for calculating degrees of basic relative invariants of $\Omega$ by using structure constants $\dim \mathcal{V}_{kj}$ (Lemma~\ref{lemma:alg}),
if a homogeneous cone $\Omega\subset V$ satisfies
\begin{equation}\label{eq:property}
\dim \mathcal{V}_{kj}=2^{k-j}\quad(1\le j< k\le r),
\end{equation}
then $\Omega$ can be an example of Theorem~\ref{theo}.
However, it is not clear whether such a homogeneous cone exists or not
(cf.\ Remark~\ref{rem:1}).
In order to explain its non-triviality,
we discuss homogeneous cones of rank $3$ in detail in Section \ref{sect:hom3}.
In particular, we will see that homogeneous cones must be related 
to the Hurwitz problem on quadratic forms (see \eqref{eq:Hurwitz}).
We shall show the existence of homogeneous cones satisfying \eqref{eq:property}
by using a method of constructing a new homogeneous cone 
from a given homogeneous cone (cf.\ \cite{N2014}).

This article is organized as follows.
Section~\ref{sect:prelim} collects definitions and basic properties of homogeneous cones.
In particular, we state an algorithm for calculating degrees of basic relative invariants of homogeneous cones in Lemma~\ref{lemma:alg}.
The proof of the main theorem, Theorem~\ref{theo} is given in Section~\ref{sect:proof}.
In Section~\ref{sect:hom3},
we consider homogeneous cones of rank $3$ in order to explain non-triviality of the existence of homogeneous cones with given $\dim \mathcal{V}_{kj}$.
We also classify irreducible homogeneous cones of rank $3$ with respect to degrees of basic relative invariants in Proposition~\ref{prop:cone-rsn}.

\section*{Acknowledgements}
The author is grateful to Professor Bruno F.\ Louren{\c{c}}o for the encouragement in writing this article.
He also thanks Professor Hideyuki Ishi for valuable comments.
The author was supported by the Grant-in-Aid of scientific research of JSPS No.\ 23K03061.

\section{Preliminaries}
\label{sect:prelim}
Let $V$ be a finite dimensional real vector space 
and 
$\Omega$ an open convex cone in $V$ containing no entire line.
The cone $\Omega$ is said to be homogeneous
if
the group $GL(\Omega)=\set{g\in GL(V)}{g\Omega=\Omega}$
acts on $\Omega$ transitively.
A homogeneous open convex cone is said to be irreducible
if
there exist no non-trivial subspaces $V_1,V_2\subset V$
and homogeneous open convex cones $\Omega_j\subset V_j$ $(j=1,2)$
such that
$V$ is a direct sum of $V_1$ and $V_2$,
and $\Omega=\Omega_1+\Omega_2$.
In this article,
we always assume that an open convex cone $\Omega$ is homogeneous and irreducible,
and call it a homogeneous cone for short.
A general theory of homogeneous cones is established by Vinberg~\cite{Vinberg}.
For describing homogeneous cones,
we employ the matrix realization of homogeneous cones due to Ishi (\cite[\S3.1]{Ishi2006})
since it requires no preliminary knowledge on the theory of convex cones.

Let $N=n_1+\cdots+n_r$ ($n_1,\dots,n_r>0$) be 	an ordered partition of
a positive integer $N$.
Let $\mathcal{V}$ be a collection of vector spaces $\mathcal{V}_{kj}\subset\mathrm{Mat}(n_k,n_j;\,\R)$ $(1\le j< k\le r)$
satisfying the following conditions.
\begin{enumerate}
\item[(V1)] $X_{kj}\in \mathcal{V}_{kj}$, $X_{ji}\in \mathcal{V}_{ji}$ $\Rightarrow$ $X_{kj}X_{ji}\in \mathcal{V}_{ki}$ $(1\le i<j<k\le r)$,
\item[(V2)] $X_{ki}\in \mathcal{V}_{ki}$, $X_{ji}\in \mathcal{V}_{ji}$ $\Rightarrow$ $X_{ki}{}^{\,t\!}X_{ji}\in \mathcal{V}_{kj}$
$(1\le i<j<k\le r)$,
\item[(V3)] $X_{kj}\in \mathcal{V}_{kj}$ $\Rightarrow$ $X_{kj}{}^{\,t\!}X_{kj}\in \R I_{n_k}$.
\end{enumerate}
Here, $I_m$ is the unit matrix of size $m$.
We introduce a subgroup $H$ of $GL(N,\R)$ and a linear subspace $V$ of $\mathrm{Sym}(N,\R)$ by
\[
H=\set{h=\pmat{
	t_{11}I_{n_1}&0&\cdots&0\\
	T_{21}&t_{22}I_{n_2}&\ddots&\vdots\\ 
	\vdots&\ddots&\ddots&0\\
	T_{r1}&T_{r2}&\cdots&t_{rr}I_{n_r}
}}{\begin{array}{l}t_{11},\dots,t_{rr}\ne 0\\ T_{kj}\in \mathcal{V}_{kj}\\
(1\le j<k\le r)\end{array}}
\]
and
\begin{equation}\label{def:V}
V=\set{x=\pmat{
	x_{11}I_{n_1}&{}^{t\!}X_{21}&\cdots&{}^{t\!}X_{r1}\\
	X_{21}&x_{22}I_{n_2}&\ddots&{}^{t\!}X_{r2}\\ 
	\vdots&\ddots&\ddots&\vdots\\
	X_{r1}&X_{r2}&\cdots&x_{rr}I_{n_r}
}}{\begin{array}{l}x_{11},\dots,x_{rr}\in\R\\ X_{kj}\in \mathcal{V}_{kj}\\
(1\le j<k\le r)\end{array}}.
\end{equation}
According to the conditions (V1)--(V3),
a map $\rho$ defined by
\[
\rho(h)x:=hx{}^{\,t\!}h\quad(h\in H,\ x\in V)
\]
is a rational representation of $H$ on $V$.
Then, an open convex cone $\Omega_\mathcal{V}$ defined by
\[
\Omega_\mathcal{V}:=\set{x\in V}{\text{$x$ is positive definite}}
\]
is a homogeneous cone of rank $r$ because
$H$ acts on $\Omega_\mathcal{V}$ transitively through $\rho$.
Conversely,
any homogeneous cone of rank $r$ can be realized as $\Omega_\mathcal{V}$ for some $\mathcal{V}$
(cf.\ Ishi~\cite{Ishi2006}).
Thus, in what follows,
we assume that a homogeneous cone $\Omega$ is realized as in this matrix form
unless otherwise mentioned.

We equip $V$ with an inner product 
\begin{equation}\label{def:innV}
\innV{x}{y}_V=\sum_{i=1}^rx_iy_i+2\sum_{1\le j<k\le r}\innV{X_{kj}}{Y_{kj}}_{kj}\quad(x,y\in V),
\end{equation}
where $\innV{\cdot}{\cdot}_{kj}$ $(1\le j< k\le r)$ is an inner product of $\mathcal{V}_{kj}$ defined by
\begin{equation}\label{def:innV kj}
\frac12\left(X_{kj}{}^{\,t\!}Y_{kj}+Y_{kj}{}^{\,t\!}X_{kj}\right)
=
\innV{X_{kj}}{Y_{kj}}_{kj}I_{n_k}\quad(X_{kj},Y_{kj}\in \mathcal{V}_{kj}).
\end{equation}
Note that this inner product is not equal to the trace inner product $\mathrm{tr}(xy)$
unless $n_1=\cdots=n_r=1$.
The dual cone $\Omega^*$ of $\Omega$ is defined to be
\[
\Omega^*:=\set{y\in V}{\innV{x}{y}_V>0\textrm{ for all }x\in\overline{\Omega}\setminus\{0\}}.
\]
Here,
$\overline{\Omega}$ is the closure of $\Omega$ in $V$.

The basic relative invariants $\Delta_1,\dots,\Delta_r$ of $\Omega$ 
are irreducible polynomials on $V$ satisfying
\[
\Delta_j\bigl(\rho(h)x\bigr)=\chi_j(h)\Delta_j(x)\quad(h\in H,\ x\in V),
\]
where $\chi_j\colon H\to\R^{\times}$ is a rational character of $H$,
and $\Omega$ can be described as
\[
\Omega=\set{x\in V}{\Delta_1(x)>0,\dots,\Delta_r(x)>0}.
\]
Since $H$ is lower triangular,
the basic relative invariants are obtained as irreducible factors of
the left upper corner principal minors $\det^{[\ell]}x$ $(\ell=1,\dots,N)$ of a matrix $x\in V$.
In this article, we determine 
the numbering of $\Delta_1,\dots,\Delta_r$
by taking irreducible factors of $\det^{[\ell]}x$ along $\ell=1,\dots,N$
so that we always have $\Delta_1(x)=x_{11}$.
Associated with $\Delta_j(x)$ $(j=1,\dots,r)$,
we introduce a matrix $\sigma:=(\sigma_{jk})_{1\le j,k\le r}$ by
\[
\Delta_j(x)=x_{11}^{\sigma_{j1}}\cdots x_{rr}^{\sigma_{jr}}
\quad (x=\mathrm{diag}(x_{11}I_{n_1},\ldots,x_{rr}I_{n_r})\in \Omega).
\]
Then,
since now we fix the numbering of basic relative invariants, 
$\sigma$ is a lower triangular matrix of integer entries
with ones on the main diagonal,
and it satisfies
\begin{equation}\label{eq:degrees}
\pmat{\deg\Delta_1\\ \vdots\\ \deg\Delta_r}
=
\sigma\pmat{1\\ \vdots\\ 1}.
\end{equation}
In the previous paper~\cite{N2014},
we give an algorithm for calculating $\sigma$
by using structure constants $d_{kj}:=\dim\mathcal{V}_{kj}$ $(1\le j<k\le r)$.
Set $\bm d_i:={}^t(0,\dots,0,d_{i+1,i},\dots,d_{ri})\in\R^r$ for $i=1,\dots,r-1$.

\begin{Lemma}[\cite{N2014}, see also {\cite[Lemma 1.1]{N2018}}]
\label{lemma:alg}
For $i=1,\dots,r-1$,
one defines $\bm l^{(j)}_i={}^t(l^{(j)}_{1i},\dots,l^{(j)}_{ri})\in\R^r$ $(j=i,\dots,r-1)$ inductively by
\[
\bm l^{(i)}_i:=\bm d_i,\qquad
\bm l^{(k)}_i:=
\begin{cases}
\bm l^{(k-1)}_i-\bm d_k	&(l^{(k-1)}_{ki}>0),\\
\bm l^{(k-1)}_i			&(l^{(k-1)}_{ki}=0)
\end{cases}
\quad(k=i+1,\dots,r-1).
\]
Let $\bm \varepsilon^{[i]}={}^t(\varepsilon_{i+1,i},\dots,\varepsilon_{ri})\in\{0,1\}^{r-i}$ $(i=1,\dots,r-1)$ be a vector defined by
\[
\varepsilon_{ji}=\begin{cases}
1&(\text{if }\quad l^{(r-1)}_{ji}>0),\\
0&(\text{if }\quad l^{(r-1)}_{ji}=0).
\end{cases}
\]
Then, $\sigma$ is given as
\[
\sigma=\mathcal{E}_{r-1}\mathcal{E}_{r-2}\cdots\mathcal{E}_1,\quad
\mathcal{E}_i=\pmat{I_{i-1}&0&0\\0&1&0\\0&\bm \varepsilon^{[i]}&I_{r-i}}\quad
(i=1,\dots,r-1).
\]
\end{Lemma}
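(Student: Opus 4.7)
The plan is to proceed by induction on the rank $r$, using block Gauss elimination (successive Schur complements) on $x\in V$. The conditions (V1)--(V3) are designed precisely so that after eliminating the first block row and column of $x$ (assuming $x_{11}\ne 0$), the Schur complement $x'$ again belongs to a matrix realization of the same type but of rank $r-1$: indeed (V3) gives $(x')_{kk} = \bigl(x_{kk} - \norm{X_{k1}}^2/x_{11}\bigr)I_{n_k}$, while (V1)--(V2) give $(x')_{kj} \in \mathcal{V}_{kj}$ for $2\le j<k\le r$. Consequently $\mathcal{V}^{(1)}_{kj} = \mathcal{V}_{kj}$, so the structure constants $d_{kj}$ with $j\ge 2$ are unchanged, and only the first column $\bm{d}_1$ is ``used up'' in this reduction step.

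Second, I would relate the basic relative invariants $\Delta_1,\dots,\Delta_r$ of $\Omega_\mathcal{V}$ to the basic relative invariants $\Delta_1^{(1)},\dots,\Delta_{r-1}^{(1)}$ of the rank-$(r-1)$ cone $\Omega_{\mathcal{V}^{(1)}}$. The identity $\det x = x_{11}^{n_1}\det(x')$ together with $\Delta_1(x)=x_{11}$ shows that each $\Delta_{j+1}(x)$ agrees with $\Delta_j^{(1)}(x')$ up to a power of $x_{11}$ used to clear denominators. On the diagonal $x = \mathrm{diag}(x_{11}I_{n_1},\dots,x_{rr}I_{n_r})$ all Schur-complement corrections vanish, so every cleared expression is a monomial in $x_{11},\dots,x_{rr}$, and the exponent vector (in the variables $x_{ii},\dots,x_{rr}$) of the $k$-th cleared Schur diagonal is exactly $\bm{l}^{(k-1)}_i$. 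The update rule -- subtract $\bm{d}_k$ when $l^{(k-1)}_{ki}>0$, and leave $\bm{l}$ alone otherwise -- records whether the next elimination step introduces a fresh denominator in $x_{ii}$ that must be cleared, or not.

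Third, I would assemble these exponents into $\sigma$. The binary vector $\bm{\varepsilon}^{[i]}$ is the support indicator of $\bm{l}^{(r-1)}_i$, identifying which of $\Delta_{i+1},\dots,\Delta_r$ pick up an extra factor of $x_{ii}$ beyond the contribution already supplied by the rank-$(r-i)$ subcone. The elementary matrix $\mathcal{E}_i$ implements exactly this ``lift'' at the level of the $\sigma$-matrix, so unfolding the induction yields $\sigma=\mathcal{E}_{r-1}\cdots\mathcal{E}_1$; the formula \eqref{eq:degrees} then follows immediately from the definition of $\sigma$ by evaluating at the all-ones diagonal.

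The main obstacle is step two: one must verify carefully that the exponent of $x_{ii}$ needed to polynomialize the $k$-th Schur diagonal evolves additively in the structure constants $d_{kj}$ according to the stated subtraction rule, and that the binary dichotomy $l^{(k-1)}_{ki}>0$ versus $=0$ is exactly the criterion for further cancellation. This combinatorial bookkeeping is the heart of the lemma; it depends essentially on the matrix-realization hypotheses (V1)--(V3), which pin down the dimensions of the spaces in which the Schur complement lives, rather than only on the abstract existence of the basic relative invariants.
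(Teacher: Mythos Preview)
The paper does not contain a proof of this lemma: it is quoted from the author's earlier works \cite{N2014} and \cite{N2018} and is used here as a black box. Consequently there is nothing in the present paper to compare your sketch against.

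That said, your outline via successive Schur complements is headed in the right direction, and your observation that (V1)--(V3) force the Schur complement of the first block to again be a matrix realization of rank $r-1$ with the \emph{same} off-diagonal spaces $\mathcal{V}_{kj}$ $(2\le j<k\le r)$ is correct and is exactly what drives the induction in the cited references. One point in your write-up is imprecise, however: you describe $\bm l^{(k-1)}_i$ as ``the exponent vector (in the variables $x_{ii},\dots,x_{rr}$) of the $k$-th cleared Schur diagonal,'' but the entries of $\bm l^{(k)}_i$ are differences of the structure constants $d_{kj}=\dim\mathcal{V}_{kj}$, which are typically much larger than the monomial exponents $\sigma_{ji}$ that one is ultimately after. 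In the argument of \cite{N2014,N2018} the vectors $\bm l^{(k)}_i$ do not record exponents directly; they track the multiplicity data in the one-dimensional characters $\chi_j$ governing $\Delta_j(\rho(h)x)=\chi_j(h)\Delta_j(x)$, and the binary test $l^{(k-1)}_{ki}>0$ versus $=0$ is precisely the criterion for whether the $k$-th step peels off a further irreducible factor or not. Only after passing to the support indicator $\bm\varepsilon^{[i]}$ and multiplying the elementary matrices $\mathcal{E}_i$ does one recover the exponent matrix $\sigma$. So your instinct that ``step two'' is where the real work lies is correct, but the bookkeeping should be phrased in terms of characters/weights rather than literal monomial exponents.
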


\begin{Remark}
\label{rem:0}
If we set $d_{kj}=2^{k-j}$ $(1\le j< k\le r)$ as in \eqref{eq:property},
then it is easily verified that
$\varepsilon_{ji}=1$ for any $1\le i<j\le r$ so that
we obtain
\[
\sigma=\bigl(\sigma_{ij}\bigr)_{1\le i,j\le r}\qquad
\sigma_{ij}=\begin{cases}
	0			&(i<j),\\
	1			&(i=j),\\
	2^{i-j-1}	&(i>j).
\end{cases}
\]
By \eqref{eq:degrees},
if such a homogeneous cone exists, then we have
\[
\deg\Delta_r=2^{r-2}+2^{r-3}+\cdots+2^0+1=2^{r-1}.
\]
However, the existence of homogeneous cones with given $\dim \mathcal{V}_{kj}$ is 
not at all trivial
(see Remark~\ref{rem:1}).
We shall see this via homogeneous cones of rank $3$ in Section~\ref{sect:hom3}.
%
\end{Remark}

\section{Proof of Theorem~\ref{theo}}
\label{sect:proof}

Let us keep all notations in the previous section,
and we shall give a proof to Theorem~\ref{theo}.
Let us define a subspace $\newone{V}$ of $\mathrm{Sym}(N+2n_1,\R)$ from data $\mathcal{V}$ by
\[
\newone{V}
=
\set{\left(\begin{array}{@{}cc|cccc@{}}
	x_{00}I_{n_1}&0		&y_{10}I_{n_1}&{}^{t\!}Y_{21}&\cdots&{}^{t\!}Y_{r1}\\
	0&x_{00}I_{n_1}		&z_{10}I_{n_1}&{}^{t\!}Z_{21}&\cdots&{}^{t\!}Z_{r1}\\
	\hline
	y_{10}I_{n_1}&z_{10}I_{n_1}
					&x_{11}I_{n_1}&{}^{t\!}X_{21}&\cdots&{}^{t\!}X_{r1}\\
	Y_{21}&Z_{21}	&X_{21}&x_{22}I_{n_2}&\ddots&{}^{t\!}X_{r2}\\
	\vdots&\vdots	&\vdots&\ddots&\ddots&\vdots\\
	Y_{r1}&Z_{r1}	&X_{r1}&X_{r2}&\cdots&x_{rr}I_{n_r}
\end{array}\right)}{
\begin{array}{l}
x_{ii}\in\R\\
(i=0,1,\dots,r)\\
y_{10},z_{10}\in\R\\
X_{kj}\in \mathcal{V}_{kj}\\
Y_{k1},Z_{k1}\in \mathcal{V}_{k1}\\
(1\le j<k\le r)
\end{array}}.
\]

\begin{Lemma}
An open convex cone $\newone{\Omega}:=\set{x\in V'}{\text{$x$ is positive definite}}$ is a homogeneous cone of rank $r+1$.
\end{Lemma}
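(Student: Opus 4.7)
The plan is to realize $\newone{V}$ as the ambient space of Ishi's matrix realization for a new data collection $\mathcal{V}'=\{\mathcal{V}'_{kj}\}_{1\le j<k\le r+1}$ attached to the ordered partition
\[
N+2n_1 = 2n_1 + n_1 + n_2 + \cdots + n_r,
\]
so that $\newone{\Omega}$ is a homogeneous cone of rank $r+1$ by the same matrix realization used throughout the paper. The crucial structural observation is that the top-left $2n_1 \times 2n_1$ diagonal block of an element of $\newone{V}$ is $\mathrm{diag}(x_{00}I_{n_1}, x_{00}I_{n_1}) = x_{00}I_{2n_1}$, a genuine scalar block; this is what makes the new partition have $r+1$ parts (not $r+2$), with $n'_1 = 2n_1$, $n'_2 = n_1$, and $n'_k = n_{k-1}$ for $k=3,\ldots,r+1$.

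With this index shift, I would define the components of $\mathcal{V}'$ by
\[
\mathcal{V}'_{21} := \set{\pmat{yI_{n_1} & zI_{n_1}}}{y,z\in\R}, \qquad
\mathcal{V}'_{k1} := \set{\pmat{Y & Z}}{Y,Z\in\mathcal{V}_{k-1,1}}\quad (k\ge 3),
\]
and $\mathcal{V}'_{kj} := \mathcal{V}_{k-1,j-1}$ for $2 \le j < k \le r+1$. A direct inspection confirms that the space of symmetric matrices built from $\mathcal{V}'$ via \eqref{def:V} coincides with $\newone{V}$, so that $\newone{\Omega}=\Omega_{\mathcal{V}'}$.

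It remains to verify conditions (V1)--(V3) for $\mathcal{V}'$. All cases in which the smallest index involved is at least $2$ reduce instantly to the corresponding conditions for $\mathcal{V}$ under the shift $(k,j)\mapsto(k-1,j-1)$. The genuinely new cases concern the first column and amount to short block matrix computations: for (V1) with $i=1$, one expands $X'_{kj}\pmat{Y & Z} = \pmat{X'_{kj}Y & X'_{kj}Z}$, whose entries lie in $\mathcal{V}_{k-1,1}$ by (V1) for $\mathcal{V}$ (or by linearity when $j=2$); for (V2) with $i=1$, the key identity is $\pmat{Y_k & Z_k}\,{}^{t\!}\pmat{Y_j & Z_j} = Y_k\,{}^{t\!}Y_j + Z_k\,{}^{t\!}Z_j$, which lies in $\mathcal{V}_{k-1,j-1}$ by (V2) for $\mathcal{V}$; and for (V3) with $j=1$, one obtains $(y^2+z^2)I_{n_1}$ when $k=2$ and $Y\,{}^{t\!}Y + Z\,{}^{t\!}Z \in \R I_{n_{k-1}}$ when $k\ge 3$ by (V3) for $\mathcal{V}$.

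The only real subtlety is the bookkeeping around the index shift and the amalgamation of the paired $\mathcal{V}_{k,1}$-components into a single space $\mathcal{V}'_{k1}$, an amalgamation that is legitimate precisely because of the scalar structure of the top-left $x_{00}I_{2n_1}$ block. Once these definitions are in place, every verification is a one-line calculation, and Ishi's framework immediately delivers that $\newone{\Omega}$ is a homogeneous cone of rank $r+1$.
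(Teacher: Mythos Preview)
Your proposal is correct and is essentially the paper's own proof: the paper introduces the same new data collection (indexed as $\{\newone{\mathcal{V}}_{kj}\}_{0\le j<k\le r}$ with $n_0=2n_1$ rather than your shifted $\{\mathcal{V}'_{kj}\}_{1\le j<k\le r+1}$) and checks (V1)--(V3) for the new first column by the very same block identities you wrote down. The only difference is the index convention.
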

\begin{proof}
Let us set 
\begin{align*}
n_0&:=2n_1,\qquad
\newone{\mathcal{V}}_{kj}:=\mathcal{V}_{kj}\quad(1\le j<k\le r),\\
\newone{\mathcal{V}}_{10}&:=\set{\pmat{y_{10}I_{n_1}&z_{10}I_{n_1}}}{y_{10},z_{10}\in\R}
\subset\mathrm{Mat}(n_1,n_0;\,\R),\\
\newone{\mathcal{V}}_{k0}&:=
\set{\pmat{Y_{k1}&Z_{k1}}}{Y_{k1},Z_{k1}\in \mathcal{V}_{k1}}
\subset\mathrm{Mat}(n_k,n_0;\,\R)
\quad(k=2,\dots,r).
\end{align*}
Then, it is enough to show that the set $\setbig{\newone{\mathcal{V}}_{kj}}{0\le j< k\le r}$
satisfies the conditions $(V1)$--$(V3)$.
Since its subset $\setbig{\newone{\mathcal{V}}_{kj}}{1\le j<k\le r}$ satisfies these conditions by definition,
we shall check it with respect to $\newone{\mathcal{V}}_{k0}$ $(k=1,\dots,r)$.

The case of $(V1)$. 
Let us take $X_{kj}\in \newone{\mathcal{V}}_{kj}$ and 
$X_{j0}\in \newone{\mathcal{V}}_{j0}$
($1\le j<k\le r$).
Then we have $X_{j0}=(Y_{j1}\,Z_{j1})$ for some $Y_{j1},Z_{j1}\in \mathcal{V}_{j1}$.
Here we regard $\mathcal{V}_{11}=\R I_{n_1}$.
Since 
\[X_{kj}Y_{j1},\ X_{kj}Z_{j1}\in \mathcal{V}_{k1}=\newone{\mathcal{V}}_{k1},\]
we obtain
\[
X_{kj}X_{j0}=\pmat{X_{kj}Y_{j1}&X_{kj}Z_{j1}}\in \newone{\mathcal{V}}_{k0}.
\]

The case of $(V2)$.
Let us take $X_{k0}\in\newone{\mathcal{V}}_{k0}$ and $X_{j0}\in\newone{\mathcal{V}}_{j0}$
$(1\le j<k\le r)$.
Then we have
$X_{k0}=(Y_{k1},\,Z_{k1})$ for some $Y_{k1},Z_{k1}\in \mathcal{V}_{k1}$
and
$X_{j0}=(Y_{j1},\,Z_{j1})$ for some $Y_{j1},Z_{j1}\in \mathcal{V}_{j1}$.
Since $Y_{k1}{}^{\,t\!}Y_{j1}$, $Z_{k1}{}^{\,t\!}Z_{j1}\in \mathcal{V}_{kj}$,
we obtain
\[
X_{k0}{}^{\,t\!}X_{j0}
=
\pmat{Y_{k1}&Z_{k1}}\pmat{{}^{t\!}Y_{j1}\\{}^{t\!}Z_{j1}}
=
Y_{k1}{}^{\,t\!}Y_{j1}+Z_{k1}{}^{\,t\!}Z_{j1}\in \mathcal{V}_{kj}=\newone{\mathcal{V}}_{kj}.
\]

The case of $(V3)$.
Let us take $X_{k0}\in\newone{\mathcal{V}}_{k0}$.
Then we have $X_{k0}=(Y_{k1},\,Z_{k1})$ for some $Y_{k1},Z_{k1}\in \mathcal{V}_{k1}$.
Since $Y_{k1}{}^{\,t\!}Y_{k1}$, $Z_{k1}{}^{\,t\!}Z_{k1}\in \R I_{n_k}$,
we obtain
\[
X_{k0}{}^{\,t\!}X_{k0}
=
\pmat{Y_{k1}&Z_{k1}}\pmat{{}^{t\!}Y_{k1}\\{}^{t\!}Z_{k1}}
=
Y_{k1}{}^{\,t\!}Y_{k1}+Z_{k1}{}^{\,t\!}Z_{k1}\in \R I_{n_k}.
\]

These calculations show that 
the set $\setbig{\newone{\mathcal{V}}_{kj}}{0\le j< k\le r}$
satisfies the conditions $(V1)$--$(V3)$,
and hence $\newone{\Omega}$ is a homogeneous cone.
\end{proof}

\begin{proof}[Proof of Theorem~\ref{theo}]
Now we can prove Theorem~\ref{theo}.
By construction of $\newone{\mathcal{V}}$, 
it satisfies
\[
\dim \newone{\mathcal{V}}_{k0}=2\dim \newone{\mathcal{V}}_{k1}\quad(k=1,\dots,r),
\]
and this construction can be proceeded again and again.
Therefore,
if we start from the one-dimensional homogeneous cone $\Omega_1=\R_{>0}$ and apply this construction repeatedly,
then the resultant is a homogeneous cone of rank $r$ with the property \eqref{eq:property}.
This shows the existence of a homogeneous cone satisfying \eqref{eq:property},
and hence we have proved Theorem~\ref{theo} from a discussion in Remark~\ref{rem:0}.
\end{proof}

\begin{Remark}
The following are the first three examples of homogeneous cones generated by this algorithm.
\[
\Omega_1=\R_{>0},\quad
\Omega_2=\set{x=\left(\begin{array}{cc|c}
	x_{00}&0&y_{10}\\
	0&x_{00}&z_{10}\\
	\hline
	y_{10}&z_{10}&x_{11}
\end{array}\right)}{x\gg0},
\]
\[
\Omega_3=\set{x=\left(\begin{array}{cccc|cc|c}
	x_{00}&0&0&0		&y_{10}&0&y_{20}^1\\
	0&x_{00}&0&0		&0&y_{10}&y_{20}^2\\
	0&0&x_{00}&0		&z_{10}&0&z_{20}^1\\
	0&0&0&x_{00}		&0&z_{10}&z_{20}^2\\
	\hline
	y_{10}&0&z_{10}&0	&x_{11}&0&x_{21}^1\rule{0pt}{9pt}\\
	0&y_{10}&0&z_{10}	&0&x_{11}&x_{21}^2\\
	\hline
	y_{20}^1&y_{20}^2
		&z_{20}^1&z_{20}^2
						&x_{21}^1&x_{21}^2&x_{33}\rule{0pt}{9pt}
\end{array}\right)}{x\gg0}.
\]
\end{Remark}

\section{Homogeneous cones of rank $3$}
\label{sect:hom3}

In this section, 
we consider homogeneous cones of rank $3$ in order to explain non-triviality of the existence of homogeneous cones with given $\dim \mathcal{V}_{kj}$.
We also give an explicit matrix realization of homogeneous cones of rank $3$
and classify them with respect to degrees of basic relative invariants.

Let $\Omega$ be a homogeneous cone of rank $3$ with $\mathcal{V}=\{\mathcal{V}_{21},\mathcal{V}_{31},\mathcal{V}_{32}\}$.
For brevity,
let us put
\[
r:=d_{32}=\dim \mathcal{V}_{32},\quad s:=d_{21}=\dim \mathcal{V}_{21},\quad n:=d_{31}=\dim \mathcal{V}_{31}.
\]
The condition $(V1)$ implies that
$X_{32}X_{21}\in \mathcal{V}_{31}$ ($X_{32}\in \mathcal{V}_{32}$, $X_{21}\in \mathcal{V}_{21}$)
and hence we have by $(V3)$
\[
\norm{X_{32}X_{21}}_{31}^2I_{n_3}
=
X_{32}X_{21}{}^{\,t\!}\bigl(X_{32}X_{21}\bigr)
=
\norm{X_{32}}_{32}^2\norm{X_{21}}_{21}^2I_{n_{3}}.
\]
Here, each $\norm{\cdot}_{kj}$ is a norm induced from the inner product $\innV{\cdot}{\cdot}_{kj}$ defined in \eqref{def:innV kj}.
Suppose that $r,s,n\ge 1$.
Then, if we take orthonormal bases $\{\bs{e}_{kj}^{(\nu)}\}_{\nu=1,\dots,\dim \mathcal{V}_{kj}}$ of $\mathcal{V}_{kj}$ with respect to these norms and if we write
\begin{equation}
\label{eq:coord}
X_{32}=\sum_{\nu=1}^rx_\nu\bs{e}_{32}^{(\nu)},\quad
X_{21}=\sum_{\nu=1}^sy_\nu\bs{e}_{21}^{(\nu)},\quad
X_{32}X_{21}=\sum_{\nu=1}^nz_\nu\bs{e}_{31}^{(\nu)},
\end{equation}
then each $z_\nu$ $(\nu=1,\dots,n)$ is a bilinear form in the $x_i$ and the $y_j$ with coefficients in $\R$, and we have
\begin{equation}
\label{eq:Hurwitz}
(x_1^2+\cdots+x_r^2)(y_1^2+\cdots+y_s^2)
=
z_1^2+\cdots+z_n^2.
\end{equation}
A triplet $(r,s,n)$ is said to be admissible if the equation \eqref{eq:Hurwitz} holds with 
some bilinear functions $z_\nu$ $(\nu=1,\dots,n)$ of the $x_i$ and the $y_j$ with coefficients in $\R$.
In order to exist such an equation, we need to have
\[
n\ge \max(r,s).
\]
It is a necessity condition, but not sufficient.
Finding an admissible triplet $(r,s,n)$ is called a Hurwitz problem,
and it is still an open problem
(cf.\ Rajwade~\cite{Rajwade},
see also Shapiro~\cite{Bk-Shapiro}).

\begin{Remark}
\label{rem:1}
Equation~\eqref{eq:Hurwitz} tells us that 
if a triplet $(r,s,n)$ is not admissible and if $r,s,n\ge 1$,
then there cannot exist a homogeneous cone satisfying
$(d_{32},d_{21},d_{31})=(r,s,n)$.
For example, it is known that if an irreducible homogeneous cone of rank $3$ satisfies
$d=d_{32}=d_{21}=d_{31}$, then we can only have $d=1,2,4,8$
(cf. Vinberg~\cite[\S8, Chapter III]{Vinberg}).
Moreover,
it is still an open problem whether
a triplet $(16,16,31)$ is admissible or not,
and hence 
so is whether there exists a homogeneous cone of rank $3$ with $(d_{32},d_{21},d_{31})=(16,16,31)$ or not.
\end{Remark}

\begin{Remark}
Vinberg~\cite[Definition 7, Chapter III]{Vinberg} reveals
relationship between
the structure of homogeneous cones
and the Hurwitz problem~\eqref{eq:Hurwitz} on quadratic forms
by stating that 
the off-diagonal spaces $\{\mathcal{V}_{kj}\}$ must satisfy
$\norm{X_{kj}X_{ji}}_{ki}=\norm{X_{kj}}_{kj}\norm{X_{ji}}_{ji}$
for any distinct integers $i,j,k\in\{1,\dots,r\}$.
There are some attempts to describe homogeneous cones of smaller dimensions
by specifying a product $X_{kj}X_{ji}$ explicitly
(see Kaneyuki--Tsuji~\cite{KT74} and Yamasaki--Nomura~\cite{YN} for example).
\end{Remark}

Conversely, if there exist bilinear functions $z_1,\dots,z_n$ of $x_i$, $y_j$
satisfying \eqref{eq:Hurwitz},
then we can construct two homogeneous cones of rank $3$, one of which satisfies
\[
(\dim \mathcal{V}_{32},\dim \mathcal{V}_{21}, \dim \mathcal{V}_{31})=(r,s,n).\]
These two homogeneous cones are mutually dual.
Assume that a triplet $(r,s,n)$ satisfies \eqref{eq:Hurwitz} with $r,s,n\ge 1$.
Let us put
$\bs{x}={}^t(x_1,\dots,x_r)\in\R^r$,
$\bs{y}={}^t(y_1,\dots,y_s)\in\R^s$ and 
$\bs{z}={}^t(z_1,\dots,z_n)\in\R^n$.
By assumption, 
there exist matrices 
$L(\bs{x})\in\mathrm{Mat}(n,s;\,\R)$ and  
$R(\bs{y})\in\mathrm{Mat}(n,r;\,\R)$ such that
$L(\bs{x})$ (resp.\ $R(\bs{y})$) is bilinear in the $x_i$ (resp.\ $y_i$) satisfying
\[
\bs{z}=L(\bs{x})\bs{y}=R(\bs{y})\bs{x}.
\]
For $m=r,s,n$,
we denote by $\innV{\cdot}{\cdot}$ the standard inner product of $\R^m$,
and by $\norm{\cdot}$ its induced norm.
Then,
by \eqref{eq:Hurwitz}, we have
\begin{equation}
\label{eq:LR}
{}^tL(\bs{x})L(\bs{x})=\norm{\bs{x}}^2I_s,\quad
{}^tR(\bs{y})R(\bs{y})=\norm{\bs{y}}^2I_r.
\end{equation}
For a symmetric matrix $X\in\mathrm{Sym}(N,\R)$,
we write $X\gg0$ if $X$ is positive definite.

\begin{Proposition}
\label{prop:cone-rsn}
Let $(r,s,n)$ be an admissible triplet with $r,s,n\ge 1$.
\begin{enumerate}[{\rm(1)}]
\item
Let $\Omega$ be an open convex cone defined by $\Omega=\set{X\in V}{X\gg0}$ where 
\begin{equation}
\label{eq:cone-rsn}
V:=\set{
X=\pmat{
	x_{11}I_n&R(\bs{y})&\bs{z}\\
	{}^{t\!}R(\bs{y})&x_{22}I_r&\bs{x}\\
	{}^{t\!}\bs{z}&{}^{t\!}\bs{x}&x_{33}
}%
}{
	\begin{array}{l}
	x_{11},x_{22},x_{33}\in\R\\
	\bs{x}\in\R^r,\ 
	\bs{y}\in\R^s,\ 
	\bs{z}\in\R^n
	\end{array}
}.
\end{equation}
Then, $\Omega$ is a homogeneous cone satisfying
$(\dim \mathcal{V}_{32},\dim \mathcal{V}_{21},\dim \mathcal{V}_{31})=(r,s,n)$.
A determinant of $X\in\Omega$ is given as
\begin{align*}
\det X
&=x_{11}^{n-r-1}
(x_{11}x_{22}-\norm{\bs{y}}^2)^{r-1}\\
&\quad\times
\left((x_{11}x_{22}-\norm{\bs{y}}^2)(x_{11}x_{33}-\norm{\bs{z}}^2)-\norm{x_{11}\bs{x}-{}^{t\!}R(\bs{y})\bs{z}}^2\right).
\end{align*}
\item
Let $\Omega'$ be an open convex cone defined by $\Omega':=\set{\Xi\in V'}{\Xi\gg0}$ where
\begin{equation}
\label{eq:dual-cone-rsn}
V':=
\set{\Xi=
	\pmat{\xi_{11}&{}^{t\!}\bs{\eta}&{}^{t\!}\bs{\zeta}\\
		\bs{\eta}&\xi_{22}I_s&{}^{t\!}L(\bs{\xi})\\
		\bs{\zeta}&L(\bs{\xi})&\xi_{33}I_n
}}{
	\begin{array}{l}
		\xi_{11},\xi_{22},\xi_{33}\in\R\\
		\bs{\xi}\in\R^r,\ \bs{\eta}\in\R^s,\ \bs{\zeta}\in\R^n
	\end{array}
}.
\end{equation}
Then, $\Omega'$ is a homogeneous cone
on which the group $H'$ defined below acts transitively by 
$\rho'(h')\Xi:=h'\Xi{}^{\,t\!}h'$ $(h'\in H',\ \Xi\in V')$ where
\[
H':=\set{
	h'=\pmat{h_{11}&{}^{t\!}\bs{c}&{}^{t\!}\bs{b}\\0&h_{22}I_s&{}^{t\!}L(\bs{a})\\0&0&h_{33}I_n}
}{
	\begin{array}{l}
		h_{11},h_{22},h_{33}\ne 0\\
		\bs{a}\in\R^r,\ \bs{b}\in\R^s,\ \bs{c}\in\R^n
	\end{array}
}.
\]
A determinant of $\Xi\in\Omega'$ is given as
\begin{align*}
	\det\Xi
	&=
	\xi_{33}^{n-s-1}(\xi_{22}\xi_{33}-\norm{\bs{\xi}}^2)^{s-1}\\
	&\quad\times
	\left((\xi_{11}\xi_{33}-\norm{\bs{\zeta}}^2)(\xi_{22}\xi_{33}-\norm{\bs{\xi}}^2)
	-\norm{\xi_{33}\bs{\eta}-{}^{t\!}L(\bs{\xi})\bs{\zeta}}^2\right).
\end{align*}
\item The cones $\Omega$ and $\Omega'$ are mutually dual through the following coupling
\[
\coupling{X}{\Xi}:=x_{11}\xi_{11}+x_{22}\xi_{22}+x_{33}\xi_{33}
+2\innV{\bs{x}}{\bs{\xi}}+2\innV{\bs{y}}{\bs{\eta}}+2\innV{\bs{z}}{\bs{\zeta}}.
\]
Namely, one has
\begin{align*}
\Omega'&=\set{\Xi\in V'}{\coupling{X}{\Xi}>0\textrm{ \rm for any } 
X \in\overline{\Omega }\setminus\{0\}},\\
\Omega &=\set{X  \in V }{\coupling{X}{\Xi}>0\textrm{ \rm for any }
\Xi\in\overline{\Omega'}\setminus\{0\}}.
\end{align*}
\end{enumerate}
\end{Proposition}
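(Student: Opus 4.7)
My plan is to dispatch the three parts in order, with (1) and (2) providing the Cholesky parametrizations that drive the duality argument in (3). For part (1), I will verify Ishi's conditions (V1)--(V3) for the off-diagonal spaces $\mathcal{V}_{21}=\set{{}^{t\!}R(\bs{y})}{\bs{y}\in\R^s}$, $\mathcal{V}_{31}=\set{{}^{t\!}\bs{z}}{\bs{z}\in\R^n}$, $\mathcal{V}_{32}=\set{{}^{t\!}\bs{x}}{\bs{x}\in\R^r}$. Condition (V1) is the Hurwitz identity ${}^{t\!}\bs{x}\cdot{}^{t\!}R(\bs{y})={}^{t\!}(R(\bs{y})\bs{x})={}^{t\!}\bs{z}$; (V2) is immediate because $\mathcal{V}_{32}$ consists of all $1\times r$ row vectors; (V3) is exactly \eqref{eq:LR}. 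Hence $\Omega=\Omega_{\mathcal{V}}$ is a rank-$3$ homogeneous cone with the stated dimensions by the preliminaries. The determinant formula is obtained by two nested Schur complements: first eliminate the $x_{11}I_n$ block, reducing to a $(r+1)\times(r+1)$ matrix of the form $\smat{uI_r&\bs{w}\\ {}^{t\!}\bs{w}&v}$ whose determinant is $u^{r-1}(uv-\norm{\bs{w}}^2)$, then clear denominators.

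For part (2) I proceed differently because $V'$ is \emph{not} in Ishi's canonical form: a rank-one matrix $\bs{\zeta}\,{}^{t\!}\bs{\eta}$ need not belong to $\mathcal{V}'_{32}=\set{L(\bs{\xi})}{\bs{\xi}\in\R^r}$, so (V2) for $V'$ fails in general. Instead I check directly that $V'$ is stable under $\rho'(H')$ and that $H'$ acts transitively on $\Omega'$. Stability reduces to three block computations: the $(3,3)$-block is immediately $h_{33}^2\xi_{33}I_n$; the $(3,2)$-block retains the form $L(\cdot)$ by linearity of $L$; and the $(2,2)$-block becomes a scalar multiple of $I_s$ via the polarization identity ${}^{t\!}L(\bs{a})L(\bs{\xi})+{}^{t\!}L(\bs{\xi})L(\bs{a})=2\innV{\bs{a}}{\bs{\xi}}I_s$ derived from \eqref{eq:LR}. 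Transitivity is a Cholesky-style induction: given $\Xi\in\Omega'$, I successively read off $h_{33}=\sqrt{\xi_{33}}$, $\bs{b}=\bs{\zeta}/h_{33}$, $\bs{a}=\bs{\xi}/h_{33}$, $h_{22}=\sqrt{\xi_{22}-\norm{\bs{a}}^2}$, $\bs{c}=(\bs{\eta}-{}^{t\!}L(\bs{a})\bs{b})/h_{22}$, and $h_{11}=\sqrt{\xi_{11}-\norm{\bs{b}}^2-\norm{\bs{c}}^2}$; positivity of each radicand is equivalent to a trailing principal minor of $\Xi$ being positive, which holds since $\Xi\gg0$. The determinant formula mirrors part (1), with Schur complement applied first to the $\xi_{33}I_n$ block.

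For part (3) I combine the two Cholesky parametrizations. Writing $X=g{}^{t\!}g$ with $g\in H$ (diagonal parameters $g_{11},g_{22},g_{33}$ and off-diagonal vectors $\bs{\gamma}\in\R^s$, $\bs{p}\in\R^r$, $\bs{r}\in\R^n$) and $\Xi=h'{}^{t\!}h'$ with $h'\in H'$ (parameters $h_{11},h_{22},h_{33},\bs{a},\bs{b},\bs{c}$), and expanding $\coupling{X}{\Xi}$ using the Hurwitz identity $R(\bs{\gamma})\bs{a}=L(\bs{a})\bs{\gamma}$ together with \eqref{eq:LR}, the raw expansion reassembles into six manifest squares:
\begin{align*}
\coupling{X}{\Xi}
&=(g_{11}h_{11})^2+(g_{22}h_{22})^2+(g_{33}h_{33})^2\\
&\quad+\norm{g_{11}\bs{c}+h_{22}\bs{\gamma}}^2+\norm{g_{22}\bs{a}+h_{33}\bs{p}}^2+\norm{g_{11}\bs{b}+h_{33}\bs{r}+L(\bs{a})\bs{\gamma}}^2.
\end{align*}
For $\Xi\in\Omega'$ one has $h_{ii}>0$, and simultaneous vanishing of all six squares forces $g_{ii}=0$ and then $\bs{\gamma}=\bs{p}=\bs{r}=0$, so $X=0$; this gives strict positivity of the coupling on $(\overline{\Omega}\setminus\{0\})\times\Omega'$, hence $\Omega'\subset\Omega^*$, and symmetrically $\Omega\subset(\Omega')^*$. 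For the reverse inclusions I construct explicit witnesses in $\overline{\Omega}$ (resp.\ $\overline{\Omega'}$) that pair non-positively with any $\Xi$ (resp.\ $X$) failing one of the trailing principal minor conditions; these witnesses are the rank-deficient matrices suggested by the factorizations of $\det X$ and $\det\Xi$ from parts (1)--(2).

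The main obstacle is the recognition of the three-way square in the identity above: the raw expansion produces the isolated term $\norm{\bs{\gamma}}^2\norm{\bs{a}}^2$ alongside the cross terms $2h_{33}\innV{{}^{t\!}L(\bs{a})\bs{r}}{\bs{\gamma}}$ and $2g_{11}\innV{L(\bs{a})\bs{\gamma}}{\bs{b}}$, which only combine with the pure $(\bs{b},\bs{r})$-block $\norm{g_{11}\bs{b}+h_{33}\bs{r}}^2$ into a single square after observing $\norm{L(\bs{a})\bs{\gamma}}^2=\norm{\bs{a}}^2\norm{\bs{\gamma}}^2$, a direct consequence of ${}^{t\!}L(\bs{a})L(\bs{a})=\norm{\bs{a}}^2I_s$ in \eqref{eq:LR}. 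This is precisely where the admissibility of the Hurwitz triplet $(r,s,n)$ enters essentially.
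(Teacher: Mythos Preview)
Your proposal is correct, and for part~(1) and the determinant computations it coincides with the paper's argument: verify (V1)--(V3) (the only nontrivial case being (V3) for $\mathcal{V}_{21}$, which is exactly ${}^{t\!}R(\bs{y})R(\bs{y})=\norm{\bs{y}}^2I_r$), then apply the Schur complement formula twice.

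The genuine divergence is in part~(2). Your observation that $\bs{\zeta}\,{}^{t\!}\bs{\eta}\notin\set{L(\bs{\xi})}{\bs{\xi}\in\R^r}$ is correct, so $V'$ does not satisfy the lower-triangular condition (V2) of the preliminaries. You respond by verifying stability of $V'$ under $\rho'(H')$ and transitivity by hand, via the polarization identity and an explicit Cholesky recursion. The paper instead introduces the \emph{upper-triangular} dual conditions (V1$^*$)--(V3$^*$), tailored to upper-triangular $H'$; in that formulation the only nontrivial check is (V3$^*$) for $\mathcal{V}^*_{23}=\set{{}^{t\!}L(\bs{\xi})}{\bs{\xi}\in\R^r}$, which is again just ${}^{t\!}L(\bs{\xi})L(\bs{\xi})=\norm{\bs{\xi}}^2I_s$. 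The condition you worried about becomes (V2$^*$), which asks that ${}^{t\!}\bs{\zeta}\,L(\bs{\xi})\in\mathcal{V}^*_{12}$, and this is automatic since $\mathcal{V}^*_{12}$ is all of $\R^{1\times s}$. So the paper's route is shorter; yours buys self-containedness (no appeal to an upper-triangular variant of Ishi's theorem) at the cost of the block computations.

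For part~(3) both arguments decompose $X$ and $\Xi$ from opposite corners and reassemble the coupling into a manifestly nonnegative expression. You use full Cholesky factorizations $X=g\,{}^{t\!}g$, $\Xi=h'\,{}^{t\!}h'$ and obtain six perfect squares; the paper uses $LDL^{t}$-type decompositions and obtains the equivalent expression
\[
\coupling{X}{\Xi}
=
x_{11}\dtilde{\xi}_{11}+\tilde{x}_{22}\tilde{\xi}_{22}+\dtilde{x}_{33}\xi_{33}
+x_{11}\,{}^{t}\bs{C}\tilde{\Xi}\bs{C}+\tilde{x}_{22}\xi_{33}\norm{\bs{d}}^2,
\]
in five nonnegative terms. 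The key algebraic input is the same in both: the Hurwitz identity $\norm{L(\bs{a})\bs{\gamma}}^2=\norm{\bs{a}}^2\norm{\bs{\gamma}}^2$ (equivalently, $R(\bs{\gamma})\bs{a}=L(\bs{a})\bs{\gamma}$) is what makes the mixed terms close up. Your six-square form has the minor advantage that the argument for strict positivity on $\overline{\Omega}\setminus\{0\}$ is immediate once you note that boundary points of $\overline{\Omega}$ still admit factorizations $g\,{}^{t\!}g$ with $g_{ii}\ge 0$.
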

\begin{proof}
(1) The conditions $(V1)$--$(V3)$ are almost trivial except for the case $\mathcal{V}_{21}$ in $(V3)$,
and it follows from \eqref{eq:LR}.
%
%
For calculating $\det X$,
we recall a basic determinant formula
\begin{equation}\label{eq:det-formula}
	\det\pmat{A&B\\C&D}=(\det A)\det(D-CA^{-1}B)\quad(\det A\ne 0).
\end{equation}
Using this formula twice,
we have by \eqref{eq:LR}
\begin{align*}
\det X&=
(\det (x_{11}I_n))\det\pmat{(x_{22}-x_{11}^{-1}\norm{\bs{y}}^2)I_s
&
\bs{x}-x_{11}^{-1}{}^{\,t\!}R(\bs{y})\bs{z}
\\
{}^t(\bs{x}-x_{11}^{-1}{}^{\,t\!}R(\bs{y})\bs{z})
&
x_{33}-x_{11}^{-1}\,\norm{\bs{z}}^2
}\\
&=
x_{11}^n(x_{22}-x_{11}^{-1}\norm{\bs{y}}^2)^s
\left(x_{33}-x_{11}^{-1}\,\norm{\bs{z}}^2-\frac{\norm{\bs{x}-x_{11}^{-1}{}^{\,t\!}R(\bs{y})\bs{z}}^2}{x_{22}-x_{11}^{-1}\norm{\bs{y}}^2}\right),
\end{align*}
and hence the assertion (1) is confirmed.

\noindent(2)
It is enough to check the following conditions, which are the upper triangular version of the matrix realization by $(V1)$--$(V3)$:
A collection of vector spaces $\mathcal{V}^*_{jk}\subset\mathrm{Mat}(n_j,n_k;\,\R)$
$(1\le j<k\le 3)$
satisfies the following conditions:
\begin{enumerate}
\item[(V1$^*$)] $X_{ij}\in \mathcal{V}^*_{ij}$, $X_{jk}\in \mathcal{V}^*_{jk}$ $\Rightarrow$ $X_{ij}X_{jk}\in \mathcal{V}^*_{ik}$ $(1\le i<j<k\le 3)$,
\item[(V2$^*$)] $X_{ik}\in \mathcal{V}^*_{ik}$, $X_{jk}\in \mathcal{V}^*_{jk}$ $\Rightarrow$ 
$X_{ik} {}^{t\!}X_{jk}\in \mathcal{V}^*_{ij}$
$(1\le i<j<k\le 3)$,
\item[(V3$^*$)] $X_{jk}\in \mathcal{V}^*_{jk}$ $\Rightarrow$ $X_{jk}{}^{\,t\!}X_{jk}\in \R I_{n_j}$
$(1\le j<k\le 3)$.
\end{enumerate}
It is almost evident except for the case $\mathcal{V}^*_{23}$ in $(V3^*)$,
and it follows from \eqref{eq:LR}.
Since the determinant of $\Xi\in\Omega'$ can be calculated by using \eqref{eq:det-formula}
in a similar way to $\det X$ $(X\in\Omega)$,
we omit it.

\noindent(3)
Associated with $X\in\Omega$ and $\Xi\in\Omega'$,
we introduce the following notations.
\begin{align*}
\tilde{X}
&=
\pmat{\tilde x_{22}I_s&\tilde{\bs{x}}\\{}^t\tilde{\bs{x}}&\tilde{x}_{33}}
:=
\pmat{(x_{22}-x_{11}^{-1}\norm{\bs{y}}^2)I_s
	&
	\bs{x}-x_{11}^{-1}{}^{\,t\!}R(\bs{y})\bs{z}
	\\
	{}^t(\bs{x}-x_{11}^{-1}{}^{\,t\!}R(\bs{y})\bs{z})
	&
	x_{33}-x_{11}^{-1}\,\norm{\bs{z}}^2
},
\\
\dtilde{x}_{33}
&:=
\tilde{x}_{33}-\frac{\norm{\tilde{\bs{x}}}^2}{\tilde{x}_{22}}=\frac{\det X}{x_{11}^{n-r}(x_{11}x_{22}-\norm{\bs{y}}^2)^r},
\\
\tilde{\Xi}
&:=
\pmat{\xi_{22}I_s&{}^tL(\bs{\xi})\\ L(\bs{\xi})&\xi_{33}I_n},
\\
\tilde{\xi}_{22}
&:=
\xi_{22}-\frac{\norm{\bs{\xi}}^2}{\xi_{33}},
\\
\dtilde{\xi}_{11}
&:=
\xi_{11}-\innV{\pmat{\bs{\eta}\\\bs{\zeta}}}{\tilde{\Xi}^{-1}\pmat{\bs{\eta}\\ \bs{\zeta}}}
=
\frac{\det \Xi}{\xi_{33}^{n-s}(\xi_{22}\xi_{33}-\norm{\bs{\xi}}^2)^s}.
\end{align*}
For 
\[
h=\pmat{I_n&0&0\\{}^tR(\bs{y}')&I_r&0\\ {}^t\bs{z}'&0&1}\in H\quad\text{and}\quad
h'=\pmat{1&{}^t\bs{y}'&{}^t\bs{z}'\\0&I_s&0\\0&0&I_n}\in H',
\]
we have by a direct computation
\[
\coupling{\rho(h)X}{\Xi}=\coupling{X}{\rho'(h')\Xi}\quad(X\in\Omega,\ \Xi\in\Omega').
\]
According to this formula,
we decompose $X\in\Omega$ and $\Xi\in\Omega'$ respectively as
\begin{align*}
X&=\pmat{I_n&0\\A&I_{r+1}}
\pmat{x_{11}I_n&0\\0&\tilde{X}}
\pmat{I_n&{}^{t\!}A\\0&I_{r+1}},\\
\Xi&=\pmat{1&{}^t\bs{B}\\0&I_{s+n}}
\pmat{\dtilde{\xi}_{11}&0\\0&\tilde{\Xi}}
\pmat{1&0\\ \bs{B}&I_{s+n}},
\end{align*}
where we set
\[
A:=\frac{1}{x_{11}}\pmat{{}^{\,t}R(\bs{y})\\ {}^t\bs{z}}\in\mathrm{Mat}(r+1,n;\,\R),\quad
\bs{B}:=\tilde{\Xi}^{-1}\pmat{\bs{\eta}\\ \bs{\zeta}}\in\R^{s+n}.
\]
Similarly, we have
with respect to $\tilde{X}$ and $\tilde{\Xi}$
\begin{align*}
\tilde{X}&=
\pmat{I_r&0\\ \tilde x_{22}^{-1}{}^{\,t}\tilde{\bs{x}}&1}
\pmat{\tilde{x}_{22}I_r&0\\0&\dtilde{x}_{33}}
\pmat{I_r&\tilde{x}_{22}^{-1}\tilde{\bs{x}}\\ 0&1},
\\
\tilde{\Xi}&=
\pmat{I_s&\xi_{33}^{-1}{}^{\,t}L(\bs{\xi})\\0&I_n}
\pmat{\tilde{\xi}_{22}I_s&0\\0&\xi_{33}I_n}
\pmat{I_s&0\\\xi_{33}^{-1}L(\bs{\xi})&I_n}.
\end{align*}
Therefore,
if we set
\[
\bs{C}:=\bs{B}+\frac{1}{x_{11}}\pmat{\bs{y}\\ \bs{z}}\in\R^{s+n},
\quad
\bs{d}:=\frac{1}{\xi_{33}}\bs{\xi}+\frac{1}{\tilde{x}_{22}}\tilde{\bs{x}},
\]
%
then we have
\begin{align*}
\coupling{X}{\Xi}
&=
\coupling{\pmat{x_{11}&0\\0&\tilde X}}%
{\pmat{1&{}^t\bs{C}\\0&I_{s+n}}%
\pmat{\dtilde{\xi}_{11}&0\\0&\tilde \Xi}%
\pmat{1&0\\ \bs{C}&I_{s+n}}}\\
&=
x_{11}(\dtilde{\xi}_{11}+{}^{\,t\!}\bs{C}\tilde\Xi\bs{C})+\bigl(\,\tilde{X}\bigm|\tilde{\Xi}\,\bigr)\\
&=
x_{11}(\dtilde{\xi}_{11}+{}^{\,t\!}\bs{C}\tilde\Xi\bs{C})\\
&\qquad
+
\coupling{\pmat{\tilde x_{22}I_r&0\\0&\dtilde{x}_{33}}}{%
\pmat{I_s&{}^tL(\bs{d})\\0&I_n}%
\pmat{\tilde\xi_{22}I_s&0\\0&\xi_{33}I_n}%
\pmat{I_s&0\\L(\bs{d})&I_n}
}\\
&=
x_{11}(\dtilde{\xi}_{11}+{}^{\,t\!}\bs{C}\tilde\Xi\bs{C})
+
\tilde x_{22}\left(\tilde\xi_{22}+\xi_{33}\norm{\bs{d}}^2\right)
+
\dtilde{x}_{33}\xi_{33}\\
&=
x_{11}\dtilde{\xi}_{11}+\tilde x_{22}\tilde\xi_{22}+\dtilde{x}_{33}\xi_{33}
+
x_{11}{}^{\,t}\bs{C}\tilde\Xi\bs{C}+\tilde x_{22}\xi_{33}\norm{\bs{d}}^2.
\end{align*}
This equation shows that the cones $\Omega$ and $\Omega'$ are mutually dual with respect to the coupling $\coupling{\cdot}{\cdot}$.
\end{proof}

We note that,
since matrices $L(\bs{\xi}){}^{\,t}L(\bs{\xi})$ $(\bs{\xi}\in\R^r)$ and $R(\bs{y}){}^{\,t}R(\bs{y})$ $(\bs{y}\in\R^s)$ may be scalar multiples of the identity matrix $I_n$,
the following factors
\begin{align}
&(x_{11}x_{22}-\norm{\bs{y}}^2)(x_{11}x_{33}-\norm{\bs{z}}^2)-\norm{x_{11}\bs{x}-{}^{t\!}R(\bs{y})\bs{z}}^2,\label{factor:1}\\
&(\xi_{11}\xi_{33}-\norm{\bs{\zeta}}^2)(\xi_{22}\xi_{33}-\norm{\bs{\xi}}^2)
	-\norm{\xi_{33}\bs{\eta}-{}^{t\!}L(\bs{\xi})\bs{\zeta}}^2\label{factor:2}
\end{align}
of $\det X$ and $\det \Xi$ respectively are not necessarily irreducible.

\begin{Remark}
Let us give an explicit description of a homogeneous cone with respect to $(r,s,n)=(3,5,7)$.
In this case, we can take for example
\[
\begin{split}
&(x_1^2+x_2^2+x_3^2)(y_1^2+y_2^2+y_3^2+y_4^2+y_5^2)\\
&\quad=
(x_1y_1+x_2y_4-x_3y_3)^2+(x_1y_2-x_2y_3-x_3y_4)^2+(x_1y_3+x_2y_2+x_3y_1)^2\\
&\qquad+(x_1y_4-x_2y_1+x_3y_2)^2+(x_1y_5)^2+(x_2y_5)^2+(x_3y_5)^2
\end{split}
\]
so that 
\[
L(\bs{x})
=
\pmat{
x_1&0&-x_3&x_2&0\\
	0&x_1&-x_2&-x_3&0\\
	x_3&x_2&x_1&0&0\\
	-x_2&x_3&0&x_1&0\\
	0&0&0&0&x_1\\
	0&0&0&0&x_2\\
	0&0&0&0&x_3	
},\quad
R(\bs{y})
=
\pmat{
y_1&y_4&-y_3\\
y_2&-y_3&-y_4\\
y_3&y_2&y_1\\
y_4&-y_1&y_2\\
y_5&0&0\\
0&y_5&0\\
0&0&y_5
}.
\]
Thus, 
the cone $\Omega$ defined by 
\[
\Omega=\left\{\left(\begin{array}{*{7}{c}|*{3}{c}|c}
x_{11}&0&0&0&0&0&0		&y_1&y_4&-y_3	&z_1\\
0&x_{11}&0&0&0&0&0		&y_2&-y_3&-y_4	&z_2\\
0&0&x_{11}&0&0&0&0		&y_3&y_2&y_1		&z_3\\
0&0&0&x_{11}&0&0&0		&y_4&-y_1&y_2	&z_4\\
0&0&0&0&x_{11}&0&0		&y_5&0&0			&z_5\\
0&0&0&0&0&x_{11}&0		&0&y_5&0			&z_6\\
0&0&0&0&0&0&x_{11}		&0&0&y_5			&z_7\\ \hline
y_1&y_2&y_3&y_4&y_5&0&0		&x_{22}&0&0	&x_1\\
y_4&-y_3&y_2&-y_1&0&y_5&0	&0&x_{22}&0	&x_2\\
-y_3&-y_4&y_1&y_2&0&0&y_5	&0&0&x_{22}	&x_3\\ \hline
z_1&z_2&z_3&z_4&z_5&z_6&z_7	&x_1&x_2&x_3&x_{33}
\end{array}\right)
\gg0\right\}
\]
is a homogeneous cone satisfying 
$(d_{32},d_{21},d_{31})=(3,5,7)$,
and the cone $\Omega'$ defined by
{\small
\[
\Omega'=\left\{
\left(
	\begin{array}{c|*{5}{c}|*{7}{c}}
	\xi_{11}	&\eta_1&\eta_2&\eta_3&\eta_4&\eta_5		&\zeta_1&\zeta_2&\zeta_3&\zeta_4&\zeta_5&\zeta_6&\zeta_7\\ \hline
	\eta_1	&\xi_{22}&0&0&0&0	&\xi_1&0&\xi_3&-\xi_2&0&0&0\\
	\eta_2	&0&\xi_{22}&0&0&0	&0&\xi_1&\xi_2&\xi_3&0&0&0\\
	\eta_3	&0&0&\xi_{22}&0&0	&-\xi_3&-\xi_2&\xi_1&0&0&0&0\\
	\eta_4	&0&0&0&\xi_{22}&0	&\xi_2&-\xi_3&0&\xi_1&0&0&0\\
	\eta_5	&0&0&0&0&\xi_{22}	&0&0&0&0&\xi_1&\xi_2&\xi_3\\ \hline
	\zeta_1		&\xi_1&0&-\xi_3&\xi_2&0		&\xi_{33}&0&0&0&0&0&0\\
	\zeta_2		&0&\xi_1&-\xi_2&-\xi_3&0	&0&\xi_{33}&0&0&0&0&0\\
	\zeta_3		&\xi_3&\xi_2&\xi_1&0&0		&0&0&\xi_{33}&0&0&0&0\\
	\zeta_4		&-\xi_2&\xi_3&0&\xi_1&0		&0&0&0&\xi_{33}&0&0&0\\
	\zeta_5		&0&0&0&0&\xi_1				&0&0&0&0&\xi_{33}&0&0\\
	\zeta_6		&0&0&0&0&\xi_2				&0&0&0&0&0&\xi_{33}&0\\
	\zeta_7		&0&0&0&0&\xi_3				&0&0&0&0&0&0&\xi_{33}
\end{array}
\right)\gg0
\right\}.
\]
}
is linearly isomorphic to the dual cone of $\Omega$.
\end{Remark}

Let $(r,s,n)$ be an admissible triplet 
and $\Omega$ a homogeneous cone of dimensions 
$(d_{32},d_{21}, d_{31})=(r,s,n)$.
We consider degrees of basic relative invariants $\Delta_1,\Delta_2,\Delta_3$ of $\Omega$ and 
those $\Delta^*_1,\Delta^*_2,\Delta^*_3$ of the dual cone $\Omega^*$ of $\Omega$.
For simplicity,
we put $d_i=\deg \Delta_i$ and $d_i^*=\deg\Delta^*_i$ $(i=1,2,3)$.
Since switching $r$ and $s$ corresponds to taking a dual cone,
we can assume $r\le s$ without loss of generality.
For a positive integer $n$,
the Hurwitz-Radon number $\rho(n)$ of $n$ is defined by
\[
\rho(n)=8\alpha+2^\beta,
\]
where we write $n=2^{4\alpha+\beta}(2\ell+1)$ with $\alpha,\beta,\ell\in\mathbb{Z}_{\ge0}$ $(0\le \beta\le 3)$ uniquely.
This number $\rho(n)$ is the maximum $r$ such that a triplet $(r,n,n)$ is admissible
(cf.\ Rajwade~\cite[Theorem 10.1]{Rajwade}).

\begin{Proposition}
Let $\Omega$ be an irreducible homogeneous cone satisfying
$(d_{32},d_{21}, d_{31})=(r,s,n)$ with $r\le s$.
Then, there are four cases on the degrees $d_i$ and $d^*_i$ $(i=1,2,3)$.
\begin{enumerate}[\rm(1)]
\item \label{item1} 
The case $r=s=n$.
In this case, one has $n=1,2,4,8$ and $\Omega$ is a symmetric cone.
The degrees are given as
\[
(d_1,d_2,d_3)=(1,2,3),\quad
(d_1^*,d_2^*,d_3^*)=(3,2,1).
\]
\item \label{item2} 
The case $1\le r<s=n$.
In this case, one has $r\le\rho(n)$ and the degrees are given as
\[
(d_1,d_2,d_3)=(1,2,4),\quad
(d_1^*,d_2^*,d_3^*)=(3,2,1).
\]
\item \label{item3} 
The case $1\le r\le s<n$.
In this case, the degrees are given as
\[
(d_1,d_2,d_3)=(1,2,4),\quad
(d_1^*,d_2^*,d_3^*)=(4,2,1).
\]
\item \label{item4} 
The case $r=0$.
In this case, one has $s,n\ge 1$ and the degrees are given as
\[
(d_1,d_2,d_3)=(1,2,2),\quad
(d_1^*,d_2^*,d_3^*)=(3,1,1).
\]
\end{enumerate}
\end{Proposition}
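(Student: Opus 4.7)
The plan is to apply Lemma~\ref{lemma:alg} directly with rank $r=3$, reading off $(d_1,d_2,d_3)$ as an explicit function of $(r,s,n)$, and then to repeat the computation on the dual side using the dual structure constants, which I claim are $(d^*_{32},d^*_{21},d^*_{31})=(s,r,n)$. Specializing the algorithm with $\boldsymbol{d}_1={}^t(0,s,n)$ and $\boldsymbol{d}_2={}^t(0,0,r)$, one step of the recursion gives
\[
\boldsymbol{l}^{(2)}_1=\begin{cases}{}^t(0,\,s,\,n-r)&(s>0),\\ {}^t(0,\,0,\,n)&(s=0),\end{cases}\qquad\boldsymbol{l}^{(2)}_2={}^t(0,0,r),
\]
so that $\varepsilon_{21}=1$ iff $s>0$, $\varepsilon_{32}=1$ iff $r>0$, and $\varepsilon_{31}=1$ iff the third coordinate of $\boldsymbol{l}^{(2)}_1$ is positive. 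Computing $\sigma=\mathcal{E}_2\mathcal{E}_1$ and reading \eqref{eq:degrees} yields
\[
(d_1,d_2,d_3)=\bigl(1,\,1+\varepsilon_{21},\,1+\varepsilon_{31}+\varepsilon_{32}(1+\varepsilon_{21})\bigr).
\]

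For the dual side, I would note that reversing the ordered partition of the matrix realization of $\Omega$ gives $(d^*_{32},d^*_{21},d^*_{31})=(s,r,n)$, which can also be confirmed directly from the form of $\Omega'$ in Proposition~\ref{prop:cone-rsn}. Applying the algorithm to these constants produces degrees $(1,d_2^\sharp,d_3^\sharp)$ with the standard ``top-left'' numbering, and the convention adopted in the proposition (pairing $\Delta_i^*$ with $\Delta_{4-i}$ under duality) gives $(d_1^*,d_2^*,d_3^*)=(d_3^\sharp,d_2^\sharp,1)$; in the self-dual case this reduces to reversing $(d_i)$.

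The four cases are then a short substitution. In case (1), $r=s=n\ge 1$: $\varepsilon_{21}=\varepsilon_{32}=1$ and $\varepsilon_{31}=0$, so $(1,2,3)$; the same computation for the dual gives $(1,2,3)$, reversed to $(3,2,1)$. In case (2), $1\le r<s=n$: all three $\varepsilon$'s are $1$, so $(1,2,4)$; the dual with constants $(n,r,n)$ forces $\varepsilon^*_{31}=0$, giving $(1,2,3)$ and hence $(3,2,1)$. In case (3), $1\le r\le s<n$: all $\varepsilon$'s are $1$ for both $\Omega$ and $\Omega^*$, so $(1,2,4)$ and $(4,2,1)$. In case (4), $r=0$: $\varepsilon_{32}=0$ gives $(1,2,2)$; the dual with $(s,0,n)$ has $\varepsilon^*_{21}=0$, giving $(1,1,3)$ and hence $(3,1,1)$.

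Exhaustiveness under $r\le s$ follows from $n\ge s$ (the admissibility inequality) when $r\ge 1$, and from $s,n\ge 1$ (forced by irreducibility) when $r=0$. The only external inputs are the restriction $n\in\{1,2,4,8\}$ in case (1), which is the classical Hurwitz theorem on composition algebras (equivalently Vinberg's classification of rank $3$ symmetric cones), and the bound $r\le\rho(n)$ in case (2), which is just the defining property of the Hurwitz--Radon number recalled before the proposition. The main potential pitfall is the bookkeeping for the dual cone, in particular the identification of its structure constants and the reversal of indexing conventions; once these are settled, everything else is immediate.
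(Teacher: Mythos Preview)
Your proposal is correct and follows essentially the same route as the paper: both arguments compute $\sigma$ (and its dual analogue) via Lemma~\ref{lemma:alg} and then read off the degrees case by case. The only cosmetic differences are that the paper handles case~(4) by writing down the basic relative invariants explicitly and case~(1) by citing the known symmetric-cone degrees, whereas you run the algorithm uniformly through all four cases and package the output into the single closed formula $(d_1,d_2,d_3)=\bigl(1,\,1+\varepsilon_{21},\,1+\varepsilon_{31}+\varepsilon_{32}(1+\varepsilon_{21})\bigr)$; your treatment of the dual via reversing the ordered partition is equivalent to the paper's upper-triangular variant with $\bm d_1^*={}^t(n,r,0)$, $\bm d_2^*={}^t(s,0,0)$.
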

\begin{proof}
We divide cases by
the number of zeros
and by the number of equal values among $r,s,n$.
We note that two or more of $r,s,n$ cannot be zero
by irreducibility.
We first assume $r=0$ so that $s,n\ge 1$.
In this case, 
by similar arguments of Proposition~\ref{prop:cone-rsn},
the cone $\Omega$ can be described as
\[
\Omega=\set{
X=\left(\begin{array}{cc|c|c}
x_{11}I_s&0&\bs{y}&0\\
0&x_{11}I_n&0&\bs{z}\\ \hline
{}^{t\!}\bs{y}&0&x_{22}&0\\ \hline
0&{}^t\bs{z}&0&x_{33}
\end{array}\right)
}{\begin{array}{l}
x_{11},x_{22},x_{33}\in\R\\
\bs{y}\in\R^s,\ \bs{z}\in\R^n\\
X\gg 0
\end{array}}
\]
and the dual cone of $\Omega$ is linearly isomorphic to
\[
\Omega'=\set{\Xi=\left(\begin{array}{c|c|c}
\xi_{11}&{}^t\bs{\eta}&{}^t\bs{\zeta}\\ \hline
\bs{\eta}&\xi_{22}I_s&0\\ \hline
\bs{\zeta}&0&\xi_{33}I_n
\end{array}\right)}{\begin{array}{l}
\xi_{11},\xi_{22},\xi_{33}\in\R\\
\bs{\eta}\in\R^s,\ \bs{\zeta}\in\R^n\\
\Xi\gg0
\end{array}}.
\]
By using these expression,
we see that 
\[
\Delta_1(X)=x_{11},\quad
\Delta_2(X)=x_{11}x_{22}-\norm{\bs{y}}^2,\quad
\Delta_3(X)=x_{11}x_{33}-\norm{\bs{z}}^2
\]
and
\[
\Delta'_1(\Xi)=\xi_{11}\xi_{22}\xi_{33}-\xi_{22}\norm{\bs{\zeta}}^2-\xi_{33}\norm{\bs{\eta}}^2,\quad
\Delta'_2(\Xi)=\xi_{22},\quad
\Delta'_3(\Xi)=\xi_{33}.
\]
These calculations show the assertion \eqref{item4}.

Next we consider the case $r\ne 0$.
In this case, we know that $(r,s,n)$ is an admissible triplet.
Let us divide cases by the number of equal values among $r,s,n$.
If all of the three coincide, 
then Vinberg~\cite[\S8, Chapter~III]{Vinberg} 
tells us that
$n=1,2,4,8$ and $\Omega$ is a symmetric cone.
In this case, 
the degrees of its basic relative invariants are well known (see \cite{FK94}, for example).
Hence, the assertion~\eqref{item1} is shown.
Next, suppose that exactly two of $r,s,n$ are the same.
There are two cases, that is, $r<s=n$ and $r=s<n$.
At first, we assume $r<s=n$ so that $r\le \rho(n)$.
We use Lemma~\ref{lemma:alg}.
In this case, we have $\bs{d}_1=\smat{0\\s\\n}=\smat{0\\n\\n}$ and $\bs{d}_2=\smat{0\\0\\r}$.
Then,
\[
\bs{l}^{(1)}_1=\bs{d}_1,\quad
\bs{l}^{(2)}_1=\bs{d}_1-\bs{d}_2=\pmat{0\\n\\n-r},\quad
\bs{l}^{(2)}_2=\bs{d}_2=\pmat{0\\0\\r},
\]
and hence we have $\varepsilon_{21}=1$, $\varepsilon_{31}=1$ and $\varepsilon_{32}=1$.
This implies that
\[
\sigma=\mathcal{E}_2\mathcal{E}_1=
\pmat{1&0&0\\0&1&0\\0&1&1}
\pmat{1&0&0\\1&1&0\\1&0&1}
=
\pmat{1&0&0\\1&1&0\\2&1&1}.
\]
For the dual cone, we use the same algorithm for 
$\bs{d}_1^*=\smat{n\\r\\0}$ and $\bs{d}_2^*=\smat{s\\0\\0}=\smat{n\\0\\0}$.
In this case, we have
\[
\bs{l}^{(1)*}_1=\bs{d}_1^*,\quad
\bs{l}^{(2)*}_1=\bs{d}^*_1-\bs{d}_2^*
=\pmat{0\\r\\0},\quad
\bs{l}^{(2)*}_2=\bs{d}_2^*=\pmat{s\\0\\0}
\]
and hence $\varepsilon^*_{13}=0$, $\varepsilon^*_{23}=1$ and $\varepsilon^*_{12}=1$.
Thus, $\sigma_*$ is calculated as
\[
\sigma_*=\mathcal{E}^*_2\mathcal{E}^*_1
=
\pmat{1&1&0\\0&1&0\\0&0&1}
\pmat{1&0&0\\0&1&1\\0&0&1}
=
\pmat{1&1&1\\0&1&1\\0&0&1}.
\]
This shows the assertion \eqref{item2}.

Next consider the case $1\le r=s<n$.
As a similar calculation as above,
we have $\bs{d}_1=\smat{0\\s\\n}=\smat{0\\r\\n}$, $\bs{d}_2=\smat{0\\0\\r}$ and
\[
\bs{l}^{(1)}_1=\bs{d}_1,\quad
\bs{l}^{(2)}_1=\bs{d}_1-\bs{d}_2=\pmat{0\\r\\n-r},\quad
\bs{l}^{(2)}_2=\bs{d}_2=\pmat{0\\0\\r},
\]
and hence $\varepsilon_{21}=1$, $\varepsilon_{31}=1$ and $\varepsilon_{32}=1$.
This implies that
\[
\sigma=\mathcal{E}_2\mathcal{E}_1=
\pmat{1&0&0\\0&1&0\\0&1&1}
\pmat{1&0&0\\1&1&0\\1&0&1}
=
\pmat{1&0&0\\1&1&0\\2&1&1}.
\]
The case $1\le r<s<n$ can be proceeded in the same lines for $\sigma$.
Moreover, we can calculate $\sigma_*$ similarly to obtain
$\sigma_*=\smat{1&1&2\\0&1&1\\0&0&1}$.
Therefore, we have shown the assertion~\eqref{item3}.
The proof is now completed.
\end{proof}


Let us explain what happens in the case (2), that is, $1\le r<s=n$.
Since $R(\bs{y})$ is an $n\times r$ matrix with $r<n$,
we see that
\[
R(\bs{y}){}^{\,t\!}R(\bs{y})\ne\norm{\bs{y}}^2I_n
\iff
\norm{{}^{t\!}R(\bs{y})\bs{z}}^2\ne\norm{\bs{y}}^2\norm{\bs{z}}^2.
\]
Therefore, the factor~\eqref{factor:1} of $\det X$ $(X\in\Omega)$ is irreducible and hence
the basic relative invariants $\Delta_i(X)$ $(i=1,2,3)$ of $\Omega$ are given as
\[
\begin{array}{c}
	\Delta_1(X)=x_{11},\quad
	\Delta_2(X)=x_{11}x_{22}-\norm{\bs{y}}^2,\\[0.5em]
	\Delta_3(X)=(x_{11}x_{22}-\norm{\bs{y}}^2)(x_{11}x_{33}-\norm{\bs{z}}^2)-\norm{x_{11}\bs{x}-{}^{t\!}R(\bs{y})\bs{z}}^2.
\end{array}
\]
On the other hand,
$L(\bs{\xi})$ is a square matrix of size $n$,
and the construction of $L(\bs{\xi})$ as in the book \cite[Theorem 10.1]{Rajwade}
implies that the matrix $\norm{\bs{\xi}}^{-1}L(\bs{\xi})$ is orthogonal and it satisfies
\[
\norm{L(\bs{\xi})\bs{\zeta}}^2=\norm{\bs{\xi}}^2\norm{\bs{\zeta}}^2.
\]
This means that
the factor~\eqref{factor:2} of $\det \Xi$ $(\Xi\in\Omega')$ is reducible and hence 
the basic relative invariants $\Delta'_i(\Xi)$ $(i=1,2,3)$ of $\Omega'$ are given as
\[
\begin{array}{c}
\Delta'_1(\Xi)=\xi_{11}\xi_{22}\xi_{33}+2\innV{\bs{\eta}}{L(\bs{\xi})\bs{\zeta}}-\xi_{11}\norm{\bs{\xi}}^2-\xi_{22}\norm{\bs{\zeta}}^2-\xi_{33}\norm{\bs{\eta}}^2,\\[0.5em]
\Delta'_2(\Xi)=\xi_{22}\xi_{33}-\norm{\bs{\xi}}^2,\quad
\Delta'_3(\Xi)=\xi_{33}.
\end{array}
\]


\bibliographystyle{abbrv}
\bibliography{bibliography}

\begin{thebibliography}{10}

\bibitem{FK94}
J.~Faraut and A.~Kor{\'a}nyi.
\newblock {\em ``Analysis on symmetric cones''}.
\newblock Clarendon Press, Oxford, 1994.

\bibitem{GIL}
J.~Gouveia, M.~Ito, and B.~F. Louren\c{c}o.
\newblock Minimal hyperbolic polynomials and ranks of homogeneous cones.
\newblock {\em arXiv:2404.03860}.

\bibitem{Gul97}
O.~G{\"u}ler.
\newblock Hyperbolic polynomials and interior point methods for convex
  programming.
\newblock {\em Mathematics of Operations Research}, 22(2):350--377, 1997.

\bibitem{GT98}
O.~G{\"u}ler and L.~Tun{\c{c}}el.
\newblock Charanterization of the barrier parameter of homogeneous convex
  cones.
\newblock {\em Mathematical Programming}, 81(1):55--76, March 1998.

\bibitem{Ishi2006}
H.~Ishi.
\newblock On symplectic representations of normal $j$-algebras and their
  application to {X}u's realizations of {S}iegel domains.
\newblock {\em Differential Geom.\ Appl.}, 24:588--612, 2006.

\bibitem{KT74}
S.~Kaneyuki and T.~Tsuji.
\newblock Classification of homogeneous bounded domains of lower dimensions.
\newblock {\em Nagoya Mathematical Journal}, 53:1--46, 1974.

\bibitem{N2014}
H.~Nakashima.
\newblock Basic relative invariants of homogeneous cones.
\newblock {\em J.\ Lie Theory}, 24:1013--1032, 2014.

\bibitem{N2018}
H.~Nakashima.
\newblock Basic relative invariants of homogeneous cones and their {L}aplce
  transforms.
\newblock {\em J. Math.\ Soc.\ Japan}, 70:323--342, 2018.

\bibitem{Rajwade}
A.~R. Rajwade.
\newblock {\em Squares}, volume 171 of {\em London Mathematical Society Lecture
  Note Series}.
\newblock Cambridge University Press, 1993.

\bibitem{Bk-Shapiro}
D.~B. Shapiro.
\newblock {\em Compositions of quadratic forms}, volume~33 of {\em De Gruyter
  Expositions in Mathematics}.
\newblock Walter de Gruyter \& Co., Berlin, 2000.

\bibitem{Vinberg}
E.~B. Vinberg.
\newblock The theory of convex homogeneous cones.
\newblock {\em Trans. Moscow Math. Soc.}, 12:340--403, 1963.

\bibitem{YN}
T.~Yamasaki and T.~Nomura.
\newblock Realization of homogeneous cones through oriented graphs.
\newblock {\em Kyushu J. Math.}, 69:11--48, 2015.

\end{thebibliography}


\end{document}